\providecommand\newdefinition\newtheorem
\numberwithin{equation}{section}
\newtheorem{theorem}{Theorem}[section]
\newtheorem{lemma}[theorem]{Lemma}
\newtheorem{proposition}[theorem]{Proposition}
\newdefinition{definition}[theorem]{Definition}
\newdefinition{example}[theorem]{Example}
\newdefinition{construction}[theorem]{Construction}
\newdefinition{remark}[theorem]{Remark}
  \newcommand{\RSet}{{\mathbb{R}}}
  \newcommand{\ZSet}{{\mathbb{Z}}}
  \newcommand{\NSet}{{\mathbb{N}}}
  \newcommand{\QSet}{{\mathbb{Q}}}
\newcommand{\Sing}{\mathop{\mathrm{Sing}}\nolimits}
\newcommand{\rk}{\mathop{\mathrm{rk}}\nolimits}
\newcommand{\crk}{b_1'}
\newcommand{\HG}{H_1(G)}
\newcommand{\im}{\mathop{\mathrm{im}}\nolimits}
\newcommand{\T}{\mathop{\mathrm{\kern0pt T}}\nolimits} %
\newcommand{\codim}{\mathop{\mathrm{codim}}\nolimits}
\newcommand*{\F}[1][\w]{{\mathcal{F}}_{\!#1}}
\newcommand{\al}{\alpha}
\newcommand{\g}{\gamma}
\newcommand{\w}{\omega}
\renewcommand{\phi}{\varphi}
\renewcommand{\b}{{b_1'(M)}}
\newcommand{\connsum}{\mathrel\#}
\newcommand{\opConnsum}{\mathop\#}
\newcommand{\opTimes}{\mathop\times}
\begin{document}

\title[The co-rank of the fundamental group]{The co-rank of the fundamental group: the~direct~product, the~first~Betti~number, and~the~topology~of~foliations}
\author{Irina Gelbukh}  
\email{gelbukh@member.ams.org}

\newcommand{\acr}{\newline\indent}
\address{CIC, Instituto Polit\'ecnico Nacional, 07738, Mexico City, MEXICO}

\begin{abstract}
We study $\b$, the co-rank of the fundamental group of a smooth closed connected manifold $M$.
We calculate this value for the direct product of manifolds.
We characterize the set of all possible combinations of $\b$ and the first Betti number $b_1(M)$ 
by explicitly constructing manifolds with any possible combination of $\b$ and $b_1(M)$ in any given dimension.
Finally, we apply our results to the topology of a Morse form foliations. 
In particular, we construct a manifolds $M$ and a Morse form $\w$ on it for any possible combination of $\b$, $b_1(M)$, $m(\w)$, and $c(\w)$, where
$m(\w)$ is the number of minimal components and $c(\w)$ is the 
maximum number of homologically independent compact leaves
of $\w$.
\end{abstract}

\def\sep{, }

\keywords
{
     co-rank\sep 
     inner rank\sep 
     manifold\sep
     fundamental group\sep 
     direct product\sep
     Morse form foliation
}
\makeatletter
\@namedef{subjclassname@2010}{%
  \textup{2010} Mathematics Subject Classification}
\makeatother
\subjclass[2010]{%
14F35\sep 
57N65\sep 
57R30%
}

\maketitle

\section{Introduction and main results}%

Co-rank of a group $G$, also known as inner rank, %
is the maximum rank of a free homomorphic image of $G$.
In a sense, co-rank is a notion dual to the rank; unlike rank, co-rank is algorithmically computable for finitely presented groups.
This notion has been re-invented various times in different branches of mathematics, and its properties relevant for the corresponding particular task have been studied in different contexts.
The notion of co-rank, called there inner rank, was apparently first mentioned in~\cite{Lyndon59} in the context of solving equations in free groups.
Co-rank of the free product of groups was calculated using geometric~\cite{Jaco72} and algebraic~\cite{Lyndon} methods.

Co-rank is extensively used in geometry, especially in geometry of manifolds, as 
$$\b=\mathop{\mathrm{corank}}(\pi_1(M)),$$ 
the co-rank of the fundamental group $\pi_1(M)$ of a 
manifold $M$. For example, it has been repeatedly shown to coincide with the genus $g$ of a closed oriented surface: $b_1'(M^2_g)=g$~\cite{Lyndon66,Jaco69,Gelb10,Leininger}.

In the theory of 3-manifolds, $\b=c(M)$~\cite{Sikora,Jaco72}, the cut number: 
the largest number $c$ of disjoint two-sided surfaces $N_1 ,\dots,N_c$ that do not separate $M$, i.e., $M \setminus (N_1\cup\dots\cup N_c)$ is connected. 
It is related to quantum invariants of $M^3$ and gives a lower bound on its Heegaard genus~\cite{Gilmer}.
Around 2001, J.~Stallings, A.~Sikora, and T.~Kerler discussed a conjecture that for a closed orientable 3-manifold it holds $\b\ge\frac{b_1(M)}3$, where $b_1(M)$ is the Betti number. This conjecture was later disproved by a number of counterexamples, such as~\cite{Harvey,Leininger}. 
In this paper we, in particular, give a complete characterization of possible pairs $\b$, $b_1(M)$ for any given $\dim M$.

In systolic geometry, every unfree 2-dimensional piecewise flat complex $X$ satisfies the bound $\mathop{S\!R}(X)\le16(b_1'(X)+1)^2$~\cite{KRS}, where $\mathop{S\!R}$ is the optimal systolic ratio.

In the theory of foliations, for the foliation $\F$ defined on $M\setminus\Sing\w$ by a closed 1-form $\w$ with the singular set $\Sing\w$,
it was shown that if $\b\le 1$ and $\codim\Sing\w\ge 3$ with $\Sing\w$ contained in a finite union of submanifolds of $M$, 
then $\F$ has no exceptional leaves~\cite{Levitt87}.
Foliations have numerous applications in physics, such as general relativity~\cite{Chen}, superstring theory~\cite{Bab-Laz,Bab-Laz-Foliated}, etc.

The notion of co-rank of $\pi_1(M)$, the notation $\b$, and the term {\em the first non-commutative Betti number}
were first introduced in~\cite{AL}
to study Morse form  foliations, i.e., foliations defined by a closed 1-form that is locally the differential of a Morse function on a smooth closed manifold $M$.

A Morse form foliation can have compact leaves, compactifiable leaves and minimal components~\cite{Gelb09}. 
In \cite{AL}, it was proved that if $\b\le 2$, then each minimal component of $\F$ is uniquely ergodic. 
On the other hand, if $\b\ge 4$, then there exists a Morse form on $M$ with a minimal component that is not uniquely ergodic.
If the form's rank 
$$
\rk\w>\b,
$$
then the foliation $\F$ has a minimal component~\cite{Levitt90}; here $\rk\w=\rk_\QSet[\w]$, where $[\w]$ is the integration map.

\medskip

Though co-rank is known to be algorithmically computable for finitely presented groups~\cite{Makanin, Razborov}, 
we are not aware of any simple method of finding $\b$ for a given manifold. 
This value is, however, bounded by the isotropy index $h(M)$, which is the maximum rank of a subgroup in $H^1(M,\ZSet)$ with trivial cup-product~\cite{Meln2}. 
Namely, for a smooth closed connected manifold it holds~\cite{Gelb10,Dimca-Pa-Su} 
$$
\b\le h(M),
$$
while for $h(M)$ there are simple estimates via $b_1(M)$ and $b_2(M)$~\cite{Meln3}.

For the connected sum of $n$-manifolds, $n\ge2$, except for non-orientable surfaces, it holds:
\begin{align}
\begin{aligned}
b_1(M_1\connsum M_2)&=b_1(M_1)+b_1(M_2),\\
b_1'(M_1\connsum M_2)&=b_1'(M_1)+b_1'(M_2),
\end{aligned}
\label{eq:b',b(sum)}
\end{align}
which for $\dim M\ge3$ follows from the Mayer--Vietoris sequence and by~\cite[Proposition 6.4]{Lyndon}, respectively.
Also, for the direct product the K\"unneth theorem gives:
\begin{align*}
b_1(M_1\times M_2)=b_1(M_1)+b_1(M_2).
\end{align*}

In this paper, we show that the fourth combination is very different:
\begin{align}
b_1'(M_1\times M_2)=\max\{b_1'(M_1),b_1'(M_2)\}
\label{eq:intro-x}
\end{align}
(Theorem~\ref{theor:b'(dir-prod)}),
which completes the missing piece to allow calculating $\b$ for manifolds that can be represented as connected sums and direct products of simpler manifolds.

We give a complete characterization of
the set of all possible combinations of $\b$ and $b_1(M)$ 
for a given $n=\dim M$. Namely, 
for $b',b\in\ZSet$, there exists a connected smooth closed $n$-manifold $M$ 
with $b'_1(M)=b'$ and $b_1(M)=b$ iff 
\begin{align}
n\ge3\mathrm{:}\quad&b'=b=0\textrm{ or }1\le b'\le b;\label{eq:intro-b'b}\\
n=2\mathrm{:}\quad&0\le b,\ b'=\textstyle{\left[\frac{b+1}2\right]};\notag\\
n=1\mathrm{:}\quad&b'=b=1;\notag\\
n=0\mathrm{:}\quad&b'=b=0;\notag
\end{align}
the manifold can be chosen orientable, except for odd $b$ when $n=2$ (Theorem~\ref{theor:(k,m)-dimM>2}). 
Using~\eqref{eq:b',b(sum)}--\eqref{eq:intro-x}, we explicitly construct such a manifold
(Construction~\ref{con:(k,m)-dimM>2}).

We apply the obtained results 
to estimation of 
the number of minimal components $m(\w)$ 
and 
the maximum number of homologically independent compact leaves $c(\w)$ 
of the foliation $\F$ of a Morse form $\w$ on $M$. 
The smaller $\b$ or $b_1(M)$, the more information we have about $\F$. For example,
\begin{align}
\begin{aligned}
m(\w)+c(\w)&\le\b\ {\textrm{\cite{Gelb10}}},\\
2m(\w)+c(\w)&\le b_1(M)\ {\textrm{\cite{Gelb09}}};
\end{aligned}
\label{eq:intro-m+c}
\end{align}
In particular, if $\b=0$, or, which is the same, $b_1(M)=0$, then all leaves of $\F$ are compact and homologically trivial and $\w=df$ is exact.

Theorem~\ref{theor:(k,m)-dimM>2}, which states that all combinations allowed by~\eqref{eq:intro-b'b} are possible,
implies that the two inequalities are independent. However, in special cases knowing the values of $\b$ and $b_1(M)$, for example, calculated using~\eqref{eq:b',b(sum)}--\eqref{eq:intro-x}, allows choosing one of the two inequalities as stronger. For instance, if $\b\le\frac12b_1(M)$, then the first one is stronger.

Finally, we generalize Theorem~\ref{theor:(k,m)-dimM>2} to a characterization of the set of possible combinations of $\b$, $b_1(M)$, $m(\w)$, and $c(\w)$ (Theorem~\ref{theor:b'bmc}). 
Namely, we use Construction~\ref{con:(k,m)-dimM>2}---the constructive proof of Theorem~\ref{theor:(k,m)-dimM>2}---to show that~\eqref{eq:intro-b'b} and~\eqref{eq:intro-m+c} are the only restrictions on these four values, except for $\b=\frac12b_1(M)$ if $\dim M=2$.

\medskip

The paper is organized as follows.
In Section~\ref{sec:defs}, we give the definitions of the Betti number $b_1$ and the non-commutative Betti number $b_1'$ for groups and manifolds.
In Section~\ref{sec:dir-prod}, we calculate 
$b_1'(M_1\times M_2)$.
In Section~\ref{sec:poss-comb}, we describe the set of possible combinations of $\b$ and  $b_1(M)$ for a given $\dim M$ and, using the results of Section~\ref{sec:dir-prod}, explicitly construct a manifold with any given valid combination of $\b$ and $b_1(M)$.
Finally, in Section~\ref{sec:fol} we 
use the manifold constructed in Section~\ref{sec:poss-comb}
to describe the set of possible combinations of the number of minimal components and the maximum number of homologically independent compact leaves of a Morse form foliation.

\section{Definitions\label{sec:defs}}

For a finitely generated abelian group $G=\ZSet^n\oplus T$, where $T$ is finite, its torsion-free rank, Pr\"ufer rank, or first Betti number, is defined as $b_1(G)=\rk(G/T)=n$. The notion of first Betti number can be extended to any finitely generated group by $b_1(G)=b_1(\HG)=\rk(\HG/\T(\HG))$, where $\HG=G^{ab}=G/[G,G]$ is the abelianization, or the first homology group, of the group $G$, and $\T(\cdot)$ is the torsion subgroup. In other words:

\begin{definition}
The {\it first Betti number} $b_1(G)$ of a finitely generated group $G$ is the maximum rank of a free abelian quotient group of $G$, i.e., the maximum rank of a free abelian group $A$ such that there exists an epimorphism $\phi:G\twoheadrightarrow A$. 
\end{definition}

Consider a connected smooth closed manifold $M$.
The first Betti number of $M$ is the torsion-free rank of its first homology group $H_1(M)$, i.e., of the first homology group of its fundamental group $\pi_1(M)$: $$b_1(M)=b_1(\pi_1(M)).$$ 

A non-commutative analog of the Betti number can be defined as follows.

\begin{definition}
\label{def:corank}
The {\em co-rank\/} $\mathop{\mathrm{corank}}(G)$~\cite{Leininger}, 
{\em inner rank\/} $\mathop{I\kern-1.3pt N}(G)$~\cite{Jaco72} or $\mathop{\mathrm{Ir}}(G)$~\cite{Lyndon}, 
or {\em first non-commutative Betti number} $\crk(G)$~\cite{AL} of a finitely generated group $G$ is the maximum rank of a free quotient group of $G$, i.e., the maximum rank of a free group $F$ such that there exists an epimorphism $\phi:G\twoheadrightarrow F$.
\end{definition}

The notion of co-rank is in a way dual to that of rank, which is the minimum rank of a free group allowing an epimorphism onto $G$. Unlike rank, co-rank is algorithmically computable for finitely presented groups~\cite{Makanin, Razborov}. 

The first non-com\-mu\-ta\-tive Betti number~\cite{AL} of a connected smooth closed manifold $M$ is defined as the co-rank, or inner rank, of its fundamental group: 
$$\b=\crk(\pi_1(M)).$$

Note that a similar definition for higher $\pi_k(M)$ is pointless since they are abelian.

\section{Co-rank of the fundamental group of the direct product\label{sec:dir-prod}}

The Betti number $b_1(M)$ is linear in both connected sum and direct product. 
While the non-commutative Betti number $\b$ is linear in connected sum, its behavior with respect to direct product is very different:

\begin{theorem}\label{theor:b'(dir-prod)}
Let $M_1, M_2$ be connected smooth closed manifolds. Then
$$
b_1'(M_1\times M_2)=\max\{b_1'(M_1),b_1'(M_2)\}.
$$
\end{theorem}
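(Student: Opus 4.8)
The plan is to reduce the statement to pure group theory and then use the fact that centralizers in free groups are cyclic. Since $\pi_1(M_1\times M_2)\cong\pi_1(M_1)\times\pi_1(M_2)$, the manifold structure is irrelevant beyond finite generation of the fundamental groups; writing $G_i=\pi_1(M_i)$, it suffices to prove that $\crk(G_1\times G_2)=\max\{\crk(G_1),\crk(G_2)\}$ for finitely generated groups. The inequality ``$\ge$'' is immediate: a free quotient $G_i\twoheadrightarrow F$ realizing $\crk(G_i)$, precomposed with the projection $G_1\times G_2\twoheadrightarrow G_i$, yields a free quotient of $G_1\times G_2$ of the same rank, so $\crk(G_1\times G_2)\ge\crk(G_i)$ for $i=1,2$.

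For the nontrivial inequality ``$\le$'', I would start from an epimorphism $\varphi:G_1\times G_2\twoheadrightarrow F$ onto a free group $F$ of rank $r=\crk(G_1\times G_2)$ and set $A=\varphi(G_1\times 1)$ and $B=\varphi(1\times G_2)$. The decisive structural feature is that the two factors of a direct product commute elementwise, so $A$ and $B$ are subgroups of $F$ that commute elementwise, while $\langle A,B\rangle=F$ because $\varphi$ is onto. Now comes the crux: in a free group the centralizer of any nontrivial element is infinite cyclic, and consequently any two commuting elements lie in a common cyclic subgroup. Hence, if both $A$ and $B$ were nontrivial, choosing $1\ne b\in B$ would force $A\subseteq C_F(b)$, a cyclic group, so $A$ is cyclic; symmetrically $B$ is cyclic; and two commuting generators of a free group generate a cyclic subgroup, so $F$ itself would be cyclic, i.e. $r\le 1$. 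Therefore, whenever $r\ge 2$, one of $A,B$ must be trivial—say $B=1$—so that $\varphi$ factors through the projection onto $G_1$, giving an epimorphism $G_1\twoheadrightarrow F$ and hence $\crk(G_1)\ge r$.

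It remains to dispose of the case $r\le 1$, which I expect to be routine. For $r=0$ the bound is vacuous, and for $r=1$ the group $G_1\times G_2$ surjects onto $\ZSet$; by additivity of the first Betti number under direct products, $b_1(G_1)+b_1(G_2)=b_1(G_1\times G_2)\ge 1$, so some $G_i$ already surjects onto $\ZSet$ and thus $\crk(G_i)\ge 1=r$. In every case $r\le\max\{\crk(G_1),\crk(G_2)\}$, which together with the easy inequality proves the theorem. The single real obstacle is the structural input on free groups—the cyclicity of centralizers, equivalently that commuting elements share a common root; everything else is bookkeeping, and the whole content of the theorem lies in translating ``the two factors commute'' into ``their images cannot jointly generate a nonabelian free group unless one image is already everything.''
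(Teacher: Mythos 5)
Your proposal is correct and follows essentially the same route as the paper: reduce to $\crk(G_1\times G_2)=\max\{\crk(G_1),\crk(G_2)\}$, get ``$\ge$'' from the projections, and for ``$\le$'' use that commuting elements of a free group lie in a common cyclic subgroup to show the images of the two factors cannot both be nontrivial unless $F\cong\ZSet$, so the epimorphism factors through a projection. The only cosmetic differences are that the paper derives the common-root fact from ``free and abelian implies cyclic'' rather than citing cyclicity of centralizers, and it settles the rank-one case by noting the image of each factor is a nontrivial free subgroup rather than via additivity of $b_1$.
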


We will divide the proof into a couple of lemmas.

\begin{lemma}\label{lem:F=Z}
Let $G_1,G_2$ be groups. Then any epimorphism 
$$\phi:G_1\times G_2\twoheadrightarrow F$$ 
onto a free group $F\ne\ZSet$ factors through a projection.
\end{lemma}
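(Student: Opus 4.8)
The plan is to study the two images $A=\phi(G_1\times\{1\})$ and $B=\phi(\{1\}\times G_2)$ of the factors and to show that, when $F\ne\ZSet$, one of them must be trivial. First I would record two structural facts that come for free from the hypotheses. Since every element of $G_1\times\{1\}$ commutes with every element of $\{1\}\times G_2$ inside the direct product, their images commute elementwise, i.e.\ each element of $A$ commutes with each element of $B$. Since $\phi$ is onto and every $(g_1,g_2)$ equals the product $(g_1,1)(1,g_2)$, the group $F$ is generated by $A\cup B$, so in fact $F=AB$. If I can force one of $A,B$ to be trivial, say $A=\{1\}$, then $G_1\times\{1\}\subseteq\ker\phi$ and $\phi$ descends to a homomorphism on the quotient $(G_1\times G_2)/(G_1\times\{1\})\cong G_2$; this is exactly the assertion that $\phi$ factors through the projection onto $G_2$ (and symmetrically onto $G_1$).

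Because $F$ is free and $F\ne\ZSet$, its rank is either $0$ or at least $2$. The rank-$0$ case is trivial: both $A$ and $B$ are then trivial and any projection works. So the real content is the case $\rk F\ge2$, where I would argue by contradiction, assuming both $A$ and $B$ are nontrivial.

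The key step, and the only place where freeness of $F$ is used, is the classical fact that the centralizer of a nontrivial element of a free group is infinite cyclic, hence abelian (equivalently, commuting elements of a free group lie in a common cyclic subgroup). Granting this, pick any $b\in B\setminus\{1\}$; since every element of $A$ centralizes $b$, we get $A\subseteq C_F(b)$, and $C_F(b)$ is abelian, so $A$ is abelian. By the symmetric argument, using some $a\in A\setminus\{1\}$, the subgroup $B$ is abelian as well. Now $F=AB$ with $A$ and $B$ abelian and commuting elementwise, which makes $F$ abelian: for $a_1b_1,a_2b_2\in F$ one computes $a_1b_1a_2b_2=a_2b_2a_1b_1$. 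But an abelian free group has rank at most $1$, contradicting $\rk F\ge2$. Hence one of $A,B$ is trivial, and the factorization follows as above.

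The main obstacle is thus concentrated in the single free-group fact about centralizers; everything else is bookkeeping about the direct-product structure. I expect the remaining care to lie in the edge cases, namely the trivial group $F$ and stating the conclusion for the correct factor, rather than in any substantial computation.
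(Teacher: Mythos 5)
Your proof is correct and follows essentially the same route as the paper: show that each factor's image is abelian using the fact that commuting elements of a free group lie in a common cyclic subgroup, conclude that $F=AB$ would be a nontrivial abelian free group, hence $\ZSet$, a contradiction. The only cosmetic difference is that you cite the centralizer theorem for free groups as a black box, while the paper derives the same fact inline from Nielsen--Schreier applied to two-generated subgroups.
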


\begin{proof}
For $F=\{1\}$ the fact is trivial. 
{\em Par abus de langage}, denote $G_1=G_1\times1$ and $G_2=1\times G_2$, subgroups of $G_1\times G_2$.
Suppose both $G_i\not\subseteq\ker\phi$.
Since $G_1\times G_2=\langle G_1,G_2\rangle$ and $[G_1,G_2]=1$ and denoting $F_i=\phi(G_i)$, 
we have a free group $F=\langle F_1,F_2\rangle$ such that $[F_1,F_2]=1$ and by the condition $F_1,F_2\ne1$.

Let $a,b\in F_1$ and $c\in F_2$, $c\ne1$. 
Since $[a,c]=1$, we have $\langle a,c\rangle=\ZSet$ as both free and abelian, 
so $a,c\in\langle x\rangle$ for some $x\in F$, 
and similarly $b,c\in\langle y\rangle$ for some $y\in F$. 
Then $\langle x,y\rangle=\ZSet$ as a two-generated free group with a non-trivial relation $x^k=y^l=c\ne1$, so $x,y\in\langle z\rangle$ for some $z\in F$. We obtain $a,b\in\langle z\rangle$; in particular, $[a,b]=1$. 

Thus $F_1$ is abelian, and similarly $F_2$. Since $[F_1,F_2]=1$, we obtain that the non-trivial $F=\langle F_1,F_2\rangle$ is both free and abelian, thus $F=\ZSet$.
\end{proof}

\begin{remark}%
In fact this holds for any (infinite) quantity of factors:
any epimorphism $\phi:\opTimes_{\alpha\in I}G_\al\twoheadrightarrow F$ onto a free group $F\ne\ZSet$ factors through the projection onto one of\/ $G_\al$.
\end{remark}

\begin{lemma}\label{lem:co-rank(dir-prod)}
Let $G_1, G_2$ be finitely generated groups. Then
for the co-rank of the direct product, 
$$\crk(G_1\times G_2)=\max\{\crk(G_1),\crk(G_2)\}.$$
\end{lemma}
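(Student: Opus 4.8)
The plan is to prove the two inequalities separately. The inequality $\crk(G_1\times G_2)\ge\max\{\crk(G_1),\crk(G_2)\}$ is straightforward: if $\psi_i\from G_i\twoheadrightarrow F$ realizes $\crk(G_i)=\rk F$ for a free group $F$, then composing with the projection $\pi_i\from G_1\times G_2\twoheadrightarrow G_i$ yields an epimorphism $\psi_i\circ\pi_i\from G_1\times G_2\twoheadrightarrow F$, so $\crk(G_1\times G_2)\ge\rk F=\crk(G_i)$ for each $i$. Taking the maximum gives the desired bound.

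For the reverse inequality I would take an epimorphism $\phi\from G_1\times G_2\twoheadrightarrow F$ onto a free group $F$ with $\rk F=\crk(G_1\times G_2)=:r$ and argue that one of the factors already admits an epimorphism onto a free group of rank $r$. I split into cases on $r$. If $r=0$ there is nothing to prove. If $r\ge2$, then $F\ne\ZSet$ and $F\ne\{1\}$, so Lemma~\ref{lem:F=Z} applies: $\phi$ factors as $\phi=\psi\circ\pi_i$ for some $i$ and some homomorphism $\psi\from G_i\to F$. Since $\phi$ and $\pi_i$ are surjective, so is $\psi$, whence $\crk(G_i)\ge r$ and therefore $\max\{\crk(G_1),\crk(G_2)\}\ge r$.

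The remaining case $r=1$, i.e.\ $F=\ZSet$, is the one point where Lemma~\ref{lem:F=Z} gives no information---and indeed the map $\ZSet\times\ZSet\twoheadrightarrow\ZSet$, $(a,b)\mapsto a+b$, shows that an epimorphism onto $\ZSet$ need not factor through a projection. I expect this to be the main (though mild) obstacle. To handle it, I would use that $G_1\times G_2=\langle G_1,G_2\rangle$, so $\phi(G_1)$ and $\phi(G_2)$ together generate $\ZSet$; hence at least one of them, say $\phi(G_1)$, is a nontrivial subgroup of $\ZSet$ and is therefore isomorphic to $\ZSet$. Restricting $\phi$ gives an epimorphism $G_1\twoheadrightarrow\phi(G_1)\cong\ZSet$, so $\crk(G_1)\ge1=r$. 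Combining the three cases yields $\crk(G_1\times G_2)\le\max\{\crk(G_1),\crk(G_2)\}$, and together with the first inequality this proves the equality.
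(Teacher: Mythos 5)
Your proposal is correct and follows essentially the same route as the paper: the easy inequality via extending an epimorphism from a factor over the product, and the reverse inequality via Lemma~\ref{lem:F=Z}, with the $F=\ZSet$ case handled separately by observing that a nontrivial subgroup of $\ZSet$ is itself infinite cyclic. The only cosmetic difference is that you case-split on $\rk F$ while the paper splits on whether $\phi$ kills one of the factors; the content is identical.
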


\begin{proof}%
Denote $G=G_1\times G_2$ and $m=\max\{\crk(G_1),\linebreak[0]\crk(G_2)\}$. 

Let us show that $\crk(G)\ge m$. Without loss of generality assume $m=\crk(G_1)$. Consider an epimorphism $\phi:G_1\twoheadrightarrow F$ onto a free group, $\rk F=m$. 
Then $\psi:G\twoheadrightarrow F$ such that $\psi(G_2)=1$ and $\psi|_{G_1}=\phi$ is an epimorphism, so $\crk(G)\ge m$.

Let us now show that $m\ge\crk(G)$. Consider an epimorphism $\phi:G\twoheadrightarrow F$ onto a free group, $\rk F=\crk(G)$. 
If $\phi(G_2)=1$, then $\phi|_{G_1}$ is an epimorphism and thus $m\ge\crk(G_1)\ge\rk F=\crk(G)$, and similarly if $\phi(G_1)=1$.

Otherwise $\rk\im\phi|_{G_i}\ge1$, so $m\ge\crk(G_i)\ge1=\crk(G)$ by Lemma~\ref{lem:F=Z}.
\end{proof}

\begin{proof}[of Theorem~\ref{theor:b'(dir-prod)}]
For smooth connected manifolds $M_i$, we have
$$
\pi_1(M_1\times M_2)=\pi_1(M_1)\times\pi_1(M_2),
$$
and the desired fact is given by Lemma~\ref{lem:co-rank(dir-prod)}.
\end{proof}

\begin{example}%
For a torus $T^n=\opTimes_{i=1}^nS^1$, we have
$b'_1(T^n)=b'_1(S^1)=1$. 
Since $\pi_1(T^n)\ne\{0\}$ is free abelian, this also follows from 
Definition~\ref{def:corank}.%

For the Kodaira-Thurston manifold $KT^4=H^3\times S^1$, we have $b'_1(KT^4)=1$ since for the Heisenberg nil manifold, $b'_1(H^3)=1$. This also follows from the fact that $KT^4$ itself is a nil manifold.
\end{example}

In Section~\ref{sec:poss-comb}, we will use Theorem~\ref{theor:b'(dir-prod)} to explicitly construct a manifold with arbitrary given $b'(M)$ and $b(M)$.

\section{Relation between $\b$ and $b_1(M)$\label{sec:poss-comb}}

As an 
application of Theorem~\ref{theor:b'(dir-prod)}, in this section we show 
that there are no non-obvious relations between $b_1'(M)$ and $b_1(M)$, and explicitly construct a manifold with any given valid pair of $b'_1(M)$ and $b_1(M)$.

For a sphere and for low-dimensional manifolds, such as closed orientable surface $M^2_g=\opConnsum^gT^2$ and closed non-orientable surface $N^2_h=\opConnsum^h\RSet P^2$, $h\ge1$, the values of $b_1'(M)$ and $b_1(M)$ are obvious or well known:
\begin{align}
\begin{aligned}
&\textrm{point:}&&b'_1(*)=0,&&b_1(*)=0;\\
&\textrm{circle:}&&b'_1(S^1)=1,&&b_1(S^1)=1;\\
&\textrm{sphere,}\ n\ge2:&&b'_1(S^n)=0,&&b_1(S^n)=0;\\
&\textrm{orientable surface:}&&b'_1(M^2_g)=g,&&b_1(M^2_g)=2g;\\
&\textrm{non-orientable surface:}&&b'_1(N^2_h)={\textstyle\left[\frac h2\right]},&&b_1(N^2_h)=h-1.
\end{aligned}
\label{eq:b(M^2_g)}
\end{align}
The value of 
$b'_1(M^2_g)$ was calculated in~\cite{Gelb10} and~\cite[Lemma 2.1]{Leininger}.
It can be also obtained 
as the cut-number~\cite[Theorem 2.1]{Jaco72}, which for a surface is the number of handles, since each non-separating two-sided circle identifies the edges of two holes. In particular, $b'_1(N^2_h)$ is a sphere with $\left[\frac h2\right]$ inverted handles plus a M\"obius strip for odd $h$.

In general,
\begin{align}
0\le b'_1(M)\le b_1(M)
\label{eq:0<b'M<bM}
\end{align}
with
\begin{align}
b_1'(M)=0\textrm{ iff\/ }b_1(M)=0,\label{eq:iff}
\end{align}
and thus
\begin{align}
b_1(M)=1\textrm{ implies\/ }b_1'(M)=1.\label{eq:implies}
\end{align}

Indeed, since for a free group $F$ it holds $\rk F^{ab}=\rk F$ and a group epimorphism $G\twoheadrightarrow F$ induces an epimorphism $G\twoheadrightarrow F^{ab}$, it  holds
\begin{align*}%
0\le b'_1(G)\le b_1(G)
\end{align*}
and since $\ZSet$ is both free and free abelian, $\crk(G)=0$ iff $b_1(G)=0$.

There are no relations between $b_1'(M)$ and $b_1(M)$ other than~\eqref{eq:b(M^2_g)}--\eqref{eq:iff}:

\begin{theorem}\label{theor:(k,m)-dimM>2}
Let $b',b,n\in\ZSet$. There exists a connected smooth closed $n$-manifold $M$
with $b'_1(M)=b'$ and $b_1(M)=b$ iff 
\begin{align}
n\ge3\mathrm{:}\quad&b'=b=0\textrm{ or }1\le b'\le b;\label{eq:1<b'<b}\\
n=2\mathrm{:}\quad&0\le b,\ b'=\textstyle{\left\lfloor\frac{b+1}2\right\rfloor};\notag\\
n=1\mathrm{:}\quad&b'=b=1;\notag\\
n=0\mathrm{:}\quad&b'=b=0;\notag
\end{align}
the manifold can be chosen orientable iff $n\ne2$ or $b$ is even. 
\end{theorem}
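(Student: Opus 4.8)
The plan is to prove Theorem~\ref{theor:(k,m)-dimM>2} as an ``iff'': first dispose of the necessity (the claimed constraints are forced), then establish sufficiency by explicitly constructing a manifold for every admissible triple $(b',b,n)$. The necessity in dimensions $n=0,1$ is immediate, since a closed $0$-manifold is a point and a closed connected $1$-manifold is $S^1$, giving the stated values via~\eqref{eq:b(M^2_g)}. For $n=2$ the constraints follow entirely from the surface classification recorded in~\eqref{eq:b(M^2_g)}: an orientable $M^2_g$ has $(b',b)=(g,2g)$ and a non-orientable $N^2_h$ has $(b',b)=(\lfloor h/2\rfloor,h-1)$, and in both cases one checks directly that $b'=\lfloor(b+1)/2\rfloor$, with $b$ even exactly for the orientable case. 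For $n\ge3$ the necessity is precisely~\eqref{eq:0<b'M<bM} together with~\eqref{eq:iff}: one has $0\le b'\le b$ always, and $b'=0\iff b=0$, which rules out everything except $b'=b=0$ and $1\le b'\le b$.

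The substance of the theorem is the sufficiency for $n\ge3$, and here I would build the manifold as a connected sum of direct products, using~\eqref{eq:b',b(sum)} and~\eqref{eq:intro-x} as the two computational tools. The idea is to assemble $b'$ copies of a ``rank-one unit'' that contributes $1$ to $b'$ and $1$ to $b$, together with extra pieces that raise $b$ without raising $b'$. The natural rank-one building block is $S^1\times S^{n-1}$: since $b'_1(S^1)=1$ and $b'_1(S^{n-1})=0$ for $n\ge3$, Theorem~\ref{theor:b'(dir-prod)} gives $b'_1(S^1\times S^{n-1})=\max\{1,0\}=1$, while by K\"unneth $b_1(S^1\times S^{n-1})=1$. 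To raise $b$ by one while keeping $b'$ fixed at $1$, I would use a factor whose own co-rank is $1$ but whose first Betti number exceeds $1$: a suitable direct product $M_0\times S^{n-k}$ where $M_0$ is a closed $k$-manifold (e.g.\ a product of circles, or a surface) with $b'_1(M_0)=1$ but large $b_1(M_0)$. Concretely, taking the connected sum of $b'$ copies of $S^1\times S^{n-1}$ yields $(b',b')$, and then one connect-sums on additional factors, each contributing $0$ to $b'$ via Theorem~\ref{theor:b'(dir-prod)} (because the product's co-rank is the max of the factors' co-ranks, and at least one factor is simply connected or otherwise has co-rank dominated) but a positive amount to $b_1$ via K\"unneth, until $b_1$ reaches the target $b$. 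The case $b'=b=0$ is handled by $M=S^n$.

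The main obstacle, and the step requiring the most care, is producing a building block that increases $b_1$ strictly more than $b'_1$ in a way that composes cleanly under both operations; this is exactly where Theorem~\ref{theor:b'(dir-prod)}'s $\max$-formula (as opposed to a sum) does the heavy lifting, since it is the only mechanism by which a factor can carry homology without carrying co-rank. I would verify that the gap $b-b'$ can be realized as a sum of such increments for every admissible pair, i.e.\ that with $1\le b'\le b$ there is enough freedom; the increments are flexible because, for instance, $T^j\times S^{n-j}$ has $b'_1=1$ but $b_1=j$, so a single well-chosen factor can absorb an arbitrary deficit $b-b'$. A secondary bookkeeping point is the orientability claim: all of $S^1\times S^{n-1}$, the tori, and their connected sums in dimension $n\ge3$ are orientable, so the constructed manifold is orientable whenever $n\ne2$; in dimension $2$ orientability is forced by the parity of $b$ through the surface classification, as already noted. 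Packaging these increments into a single explicit recipe is what Construction~\ref{con:(k,m)-dimM>2} will record.
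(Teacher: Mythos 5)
Your necessity argument and the overall architecture (connected sum of $b'$ pieces, each with co-rank $1$, whose first Betti numbers sum to $b$) match the paper's Construction~\ref{con:(k,m)-dimM>2}. But there is a genuine gap in the sufficiency for $n\ge3$: the building blocks you propose cannot realize all admissible pairs. Your rank-one unit with large homology is $T^j\times S^{n-j}$, which has $b_1'=1$ and $b_1=j$ --- but $j$ is capped by the ambient dimension, $j\le n$. More generally, any direct product of circles, spheres, and surfaces of total dimension $n$ with $b_1'=1$ has $b_1\le n$ (each factor of dimension $d_i$ with co-rank $\le1$ contributes at most $d_i$ to $b_1$ among these elementary pieces), and you cannot compensate by connected sum, because by~\eqref{eq:b',b(sum)} co-rank is \emph{additive} under connected sum, so any summand with $b_1>0$ has $b_1'\ge1$ by~\eqref{eq:iff} and raises $b'$. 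Your sentence about connect-summing on ``additional factors, each contributing $0$ to $b'$ via Theorem~\ref{theor:b'(dir-prod)}'' conflates the two operations: the $\max$ formula~\eqref{eq:intro-x} applies only to direct products, whose total dimension is fixed at $n$. Consequently your recipe reaches only pairs with $b\le nb'$, and e.g.\ $(b',b)=(1,4)$ in dimension $3$ is out of reach. That this cannot be fixed by cleverer elementary blocks is essentially the content of the disproved Stallings--Sikora--Kerler conjecture $b_1'\ge\frac{1}{3}b_1$ cited in the introduction: producing a closed $3$-manifold with $b_1'=1$ and $b_1$ arbitrarily large requires a nontrivial input.

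The paper supplies exactly that missing ingredient: for $n=3$ it invokes Harvey's hyperbolic $3$-manifolds $H_b$ with $b_1'(H_b)=1$ and $b_1(H_b)=b$ as in~\eqref{eq:H_b}, and for $n\ge4$ it uses the realization of every finitely presented group as $\pi_1$ of a closed orientable $n$-manifold together with a group-theoretic existence result for groups with prescribed $(\mathrm{corank},b_1)$; Construction~\ref{con:(k,m)-dimM>2} then propagates $H_b$ to higher dimensions via $H_b\times S^{n-3}$ using Lemma~\ref{lem:xS^n} and takes connected sums exactly as you describe. So your skeleton is right, but the crucial atomic piece $H^n_{1,k}$ with $k$ unbounded cannot be manufactured from spheres and tori; you must either cite such examples or prove their existence separately.
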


\begin{proof}
For $n\le2$ and for $b=0$ the facts are given in~\eqref{eq:b(M^2_g)},
so let $n\ge 3$ and $b\ge1$.

For $n\ge4$, every finitely presented group is the fundamental group of a connected smooth closed orientable $n$-manifold $M$, while
by~\cite[Theorem 3]{Gelb17} there exists such a group $G$ with $\crk(G)=b'$ and $b_1(G)=b$, which proves the result for $n\ge4$.

Finally, let $n=3$. 
For any given $b\ge1$, Harvey~\cite[Theorem 3.1]{Harvey} constructed a smooth closed orientable hyperbolic 3-manifold $H_b$ with the largest possible gap between $b_1'$ and $b_1$: 
\begin{align}
b_1'(H_b)=1,\ b_1(H_b)=b.
\label{eq:H_b}
\end{align}
For $1\le b'\le b$, choose $k_i\ge1$ such that $\sum_{i=1}^{b'} k_i=b$.
By~\eqref{eq:b',b(sum)}, for 
\begin{align}
M=\opConnsum_{i=1}^{b'} H_{k_i}
\label{eq:H31b}
\end{align}
we have $b_1'(M)=b',\ b_1(M)=b$.
\end{proof}

Using Theorem~\ref{theor:b'(dir-prod)}, we can generalize any specific example with given $\crk(M)$ and $b_1(M)$ to higher dimensions, 
as well as to increase the gap between $b_1'(M)$ and $b_1(M)$:

\begin{lemma}\label{lem:xS^n}
Let $b_1'(M^n)\ne0$. Then for $M^{n+k}=M^n\times S^k$ it holds
\begin{align*}
k=1:&\qquad b'_1(M^{n+k})=b'_1(M^n),&&b_1(M^{n+k})=b_1(M^n)+1;\\
k\ge2:&\qquad b'_1(M^{n+k})=b'_1(M^n),&&b_1(M^{n+k})=b_1(M^n).
\end{align*}
\end{lemma}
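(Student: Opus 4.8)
The plan is to read off both quantities from the product formulas already established, namely Theorem~\ref{theor:b'(dir-prod)} for the co-rank and the K\"unneth theorem for the first Betti number, and then substitute the values of $b_1'$ and $b_1$ for spheres recorded in~\eqref{eq:b(M^2_g)}.

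First I would handle the non-commutative Betti number. By Theorem~\ref{theor:b'(dir-prod)},
$$b_1'(M^n\times S^k)=\max\{b_1'(M^n),b_1'(S^k)\}.$$
Since $b_1'(S^1)=1$ and $b_1'(S^k)=0$ for $k\ge2$ by~\eqref{eq:b(M^2_g)}, the hypothesis $b_1'(M^n)\ne0$---that is, $b_1'(M^n)\ge1$---yields $\max\{b_1'(M^n),1\}=b_1'(M^n)$ when $k=1$ and $\max\{b_1'(M^n),0\}=b_1'(M^n)$ when $k\ge2$. In both cases $b_1'(M^{n+k})=b_1'(M^n)$, as claimed.

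Next I would treat the ordinary first Betti number. By the K\"unneth theorem,
$$b_1(M^n\times S^k)=b_1(M^n)+b_1(S^k).$$
Using $b_1(S^1)=1$ and $b_1(S^k)=0$ for $k\ge2$ from~\eqref{eq:b(M^2_g)}, this gives $b_1(M^{n+1})=b_1(M^n)+1$ and $b_1(M^{n+k})=b_1(M^n)$ for $k\ge2$.

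The only point deserving attention is the role of the hypothesis $b_1'(M^n)\ne0$: it is exactly what is needed in the case $k=1$ to exclude the alternative $\max\{b_1'(M^n),1\}=1>b_1'(M^n)$, which would arise if $b_1'(M^n)$ vanished; for $k\ge2$ the hypothesis is not used at all. Thus there is no genuine obstacle here. The entire content of the lemma is carried by Theorem~\ref{theor:b'(dir-prod)}, and the statement merely packages that result together with the K\"unneth formula in the convenient form needed for the dimension-raising argument in Section~\ref{sec:poss-comb}.
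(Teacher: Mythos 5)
Your proof is correct and is exactly the argument the paper intends: the lemma is stated without an explicit proof precisely because it follows immediately from Theorem~\ref{theor:b'(dir-prod)} together with the K\"unneth formula and the sphere values in~\eqref{eq:b(M^2_g)}, which is what you carry out. Your remark that the hypothesis $b_1'(M^n)\ne0$ is only needed in the case $k=1$ is also accurate.
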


This allows us to explicitly construct a manifold with given $b_1'(M)$ and $b_1(M)$ of any given dimension, thus giving a simple constructive proof
of Theorem~\ref{theor:(k,m)-dimM>2} for $\dim M\ge3$:

\begin{construction}\label{con:(k,m)-dimM>2}
For any given $b',b,n\in\ZSet$ such that $b'=b=0$ or $1\le b'\le b$, and $n\ge3$,
the following connected smooth closed oriented $n$-manifold $H^n_{b',b}$ has
\begin{align*}
b_1'(H^n_{b',b})=b',\ b_1(H^n_{b',b})=b.
\end{align*}
For $b'=b=1$, consider
\begin{align}
H^n_{1,1}=S^1\times S^{n-1}
\label{eq:Hn11}
\end{align}
and for $b\ge2$, generalize~\eqref{eq:H_b} 
to higher dimensions using Lemma~\ref{lem:xS^n}:
\begin{align}
H^n_{1,b}=
\begin{cases}
H_b&$for $n=3,\\
H_{b-1}\times S^1&$for $n=4,\\
H_b\times S^{n-3}&$for $n\ge5.
\end{cases}
\label{eq:Hn1b}
\end{align}
Finally, as in~\eqref{eq:H31b},
choose $k_i\ge1$ such that 
\begin{align}
\sum_{i=1}^{b'} k_i=b
\label{eq:sum-m_i=b}
\end{align}
and take
\begin{align}
H^n_{b',b}=\opConnsum_{i=1}^{b'} H^n_{1,k_i}.
\label{eq:Hnb'b}
\end{align}
\end{construction}

By Theorem~\ref{theor:(k,m)-dimM>2},
in~\eqref{eq:0<b'M<bM} both the lower bound (except for $n=1$) and the upper bound (except for surfaces other than $S^2$, $\RSet P^2$, and the Klein bottle) are exact for any given $n$. Both conditions~\eqref{eq:iff} are impossible for $n=1$ and both conditions~\eqref{eq:implies} are impossible for $n=0$. 

In particular, the lower bound in~\eqref{eq:0<b'M<bM} is achieved on $S^n$, while~\eqref{eq:Hn11} and~\eqref{eq:Hn1b} provide the lower bound in the inequality in~\eqref{eq:1<b'<b}.
The upper bound $\b=b_1(M)$ for $n\ge3$ is provided by~\eqref{eq:Hnb'b} with $b'=b$:
$$H^n_{b,b}=\opConnsum_{i=1}^b(S^1\times S^{n-1}).$$
In general, 
$\b=b_1(M)$ iff some (and thus any) epimorphism $$\pi_1(M)\twoheadrightarrow H_1(M)/\T(H_1(M))$$
factors through a free group; $\T(\cdot)$ is the torsion subgroup:

\begin{proposition}%
For any group $G$, the following conditions are equivalent:
\begin{enumerate}
\renewcommand{\labelenumi}{$\mathrm{\theenumi}$}
\renewcommand{\theenumi}{(\roman{enumi})}
\item\label{lem:co-rank=b:=} $\crk(G)=b_1(G)$, 
\item\label{lem:co-rank=b:E} there exists an epimorphism $$h:G\twoheadrightarrow \ZSet^{b_1(G)}=\HG/T$$ that factors through a free group; $T\subset\HG$ is the torsion subgroup,
\item\label{lem:co-rank=b:A} any such epimorphism factors through a free group.
\end{enumerate}
\end{proposition}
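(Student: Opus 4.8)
The plan is to prove the cycle of implications (iii)$\Rightarrow$(ii)$\Rightarrow$(i)$\Rightarrow$(iii). Throughout I set $b=b_1(G)$ and write $\pi\from G\twoheadrightarrow\HG/T\cong\ZSet^b$ for the canonical projection onto the maximal torsion-free abelian quotient. Two facts established earlier in this section will be used repeatedly: first, $\crk(G)\le b$ always; second, $\rk F=\rk F^{ab}$ for a free group $F$. Since $\pi$ is itself an epimorphism of the required form, such $h$ exist, and (iii)$\Rightarrow$(ii) is immediate: apply (iii) to the particular choice $h=\pi$.

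For (ii)$\Rightarrow$(i), I would start from a factorization $h=q\circ p$ with $p\from G\to F$ and $q\from F\to\ZSet^b$, where $F$ is free. The delicate point is that $p$ need not be surjective, so $F$ itself is not yet a free quotient of $G$; the remedy is to pass to the image $F'=\im p$, which is free by the Nielsen--Schreier theorem and satisfies $p\from G\twoheadrightarrow F'$. Because $h$ is onto, the restriction $q|_{F'}\from F'\twoheadrightarrow\ZSet^b$ is onto as well, so abelianizing gives a surjection $(F')^{ab}\twoheadrightarrow\ZSet^b$ and hence $\rk F'=\rk(F')^{ab}\ge b$. Thus $\crk(G)\ge\rk F'\ge b$, which together with $\crk(G)\le b$ forces $\crk(G)=b$.

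For (i)$\Rightarrow$(iii), assume $\crk(G)=b$ and fix an epimorphism $\phi\from G\twoheadrightarrow F$ onto a free group with $\rk F=b$. Composing with the abelianization $F\to F^{ab}\cong\ZSet^b$ produces an epimorphism $h_0\from G\twoheadrightarrow\ZSet^b$ that, by construction, factors through the free group $F$. Now I would invoke the near-uniqueness of the projection onto $\ZSet^b$: any epimorphism $G\twoheadrightarrow\ZSet^b$ has abelian, torsion-free image and therefore factors through $\pi$, inducing a surjective endomorphism of $\ZSet^b$, which is an automorphism since $\ZSet^b$ is Hopfian. Applying this to both $h_0$ and an arbitrary $h$ as in (iii) yields automorphisms expressing $h=\alpha\circ h_0$ for some $\alpha\in\mathrm{Aut}(\ZSet^b)$; since $h_0$ factors through $F$, so does $h$.

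The main obstacle is the non-surjectivity of $p$ in the step (ii)$\Rightarrow$(i): one cannot read a free quotient of $G$ of rank $b$ directly off the factorization, and the argument genuinely needs that subgroups of free groups are free in order to replace $F$ by $\im p$. The remaining work is bookkeeping with the universal property of $\HG/T$ and the fact that an epimorphism between free abelian groups of equal finite rank is an isomorphism.
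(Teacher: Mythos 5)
Your proof is correct and follows essentially the same route as the paper's: (i)$\Rightarrow$(ii) by composing with the abelianization of $F$, (ii)$\Rightarrow$(i) by the rank comparison $b_1(G)\le\rk F\le\crk(G)$ combined with $\crk(G)\le b_1(G)$, and the equivalence with (iii) via the fact that any two epimorphisms onto $\ZSet^{b_1(G)}$ differ by an automorphism. The only substantive difference is that you allow the first map in the factorization to be non-surjective and repair this with Nielsen--Schreier, whereas the paper simply takes both maps in $G\twoheadrightarrow F\twoheadrightarrow\HG/T$ to be epimorphisms; your explicit appeal to the Hopfian property of $\ZSet^{b_1(G)}$ is also a slightly more careful justification of the automorphism step than the paper's.
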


\begin{proof}
\ref{lem:co-rank=b:=}\,$\Rightarrow$\,\ref{lem:co-rank=b:E}:
Let $\phi:G\twoheadrightarrow F$ be the epimorphism from the definition of $\crk(G)$ and $\psi:F\twoheadrightarrow F/[F,F]=\ZSet^{\crk(G)}=\ZSet^{b_1(G)}$ be the natural epimorphism; then $h=\psi\circ\phi$ has the desired properties.

\ref{lem:co-rank=b:E}\,$\Rightarrow$\,\ref{lem:co-rank=b:=}:
Let $G\twoheadrightarrow F\twoheadrightarrow\HG/T$ be a factorization of $h$ through a free group $F$. 
Then $b_1(G)=\rk (\HG/T)\le\rk F\le\crk(G)$. By~\cite[Theorem 3]{Gelb17}, $\crk(G)\le b_1(G)$, so we obtain $\crk(G)=b_1(G)$.

\ref{lem:co-rank=b:E}\,$\Rightarrow$\,\ref{lem:co-rank=b:A}: 
For any epimorphisms $\phi,h:G\twoheadrightarrow H$ there exists an automorphism $\psi:H\twoheadrightarrow H$ such that $\phi=\psi\circ h$.

\ref{lem:co-rank=b:A}\,$\Rightarrow$\,\ref{lem:co-rank=b:E}:
The composition of natural epimorphisms $$G\twoheadrightarrow\HG=G/[G,G]\twoheadrightarrow\HG/T=\ZSet^{b_1(G)}$$ is an epimorphism.
\end{proof}

\section{Application to foliation topology\label{sec:fol}}

The gap between $\b$ and $b_1(M)$ plays a role in foliation topology.

\subsection{Useful facts about Morse form foliations}%

Consider a connected closed oriented $n$-manifold $M$ with a Morse form $\w$, i.e., a closed 1-form
with Morse singularities---locally the differential of a Morse
function. The set of its singularities $\Sing\w$ is finite.
This form defines a foliation $\F$ on $M\setminus\Sing\w$. 

Its leaves $\g$ can be classified into compact, compactifiable ($\g\cup\Sing\w$ is compact), and non-compactifiable.
The set covered by all non-compactifiable leaves is open and has a finite number $m(\w)$ of connected components, called {\em minimal components}~\cite{AL}. Each non-compactifiable leaf is dense in its minimal component~\cite{Im}. %
A foliation is called {\em minimal} if all its leaves are non-compactifiable, i.e., the whole $M\setminus\Sing\w$ is one minimal component.

Any compact leaf has a cylindrical neighborhood consisting of leaves that are diffeomorphic and homotopically equivalent to it~\cite{Gelb14}.
Denote by $H_\w\subseteq H_{n-1}(M)$ the group generated by the homology classes of all compact leaves of $\F$. Since $M$ is closed and oriented, $H_{n-1}(M)$ is finitely generated and free; therefore so is $H_\w$. By~\cite[Theorem 4]{Gelb08}, in $H_\w$ there exists a basis consisting of homology classes of leaves, i.e., $\F$ has exactly $c(\w)=\rk H_\w$ homologically independent compact leaves. 

\begin{lemma}\label{lem:m+m,c+c}
Let $\w_1,w_2$ be Morse forms defined on smooth closed oriented manifolds $M_1,M_2$, respectively.
Then on $M=M_1\connsum M_2$ there exists a Morse form $\w$ with $m(\w)=m(\w_1)+m(\w_2)$ and\/ $c(\w)=c(\w_1)+c(\w_2)$.
\end{lemma}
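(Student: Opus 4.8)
The plan is to construct the desired Morse form $\w$ on the connected sum $M = M_1 \connsum M_2$ by performing the connected-sum operation in a neighborhood where both forms can be controlled, and then to verify the additivity of $m(\w)$ and $c(\w)$ separately. First I would observe that a connected sum removes an open ball from each $M_i$ and glues the resulting boundary spheres. The key technical point is to choose the ball in each $M_i$ to lie inside a region where the foliation $\F[\w_i]$ is well-understood---specifically, inside a cylindrical neighborhood of a compact leaf, or, failing the existence of such a leaf, near a center-type singularity. Since a Morse form has a center singularity giving a local family of small compact spheres (for a local maximum or minimum of the Morse function), I would excise a small ball bounded by such a spherical leaf from each $M_i$, arranging that the excised leaf is homologically trivial and lies in no minimal component. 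The gluing is then done so that the two boundary spheres are identified and the forms match up (after scaling by a suitable constant) to produce a genuine closed $1$-form with Morse singularities on $M$.

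The main structural claim is that this surgery, performed away from all minimal components and away from every homologically essential compact leaf, changes neither count. For the minimal components: the excised balls meet no non-compactifiable leaf, so every minimal component of $\F[\w_1]$ and of $\F[\w_2]$ survives intact in $M$, and the gluing region, being foliated by small spheres, introduces only compactifiable leaves; hence $m(\w) = m(\w_1) + m(\w_2)$. For the compact leaves: I would invoke the Mayer--Vietoris sequence for $M = M_1 \connsum M_2$, which in the relevant degree $n-1$ (with $n \ge 3$) gives $H_{n-1}(M) \cong H_{n-1}(M_1) \oplus H_{n-1}(M_2)$, since the connecting maps involving $H_{n-1}(S^{n-1})$ and $H_*(S^{n-1})$ vanish in the appropriate spots. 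A compact leaf of $\w$ lying in the part coming from $M_i$ has a well-defined homology class, and under this splitting the subgroup $H_\w$ decomposes as $H_{\w_1} \oplus H_{\w_2}$. Because the newly created small spherical leaves in the gluing region are null-homologous, they contribute nothing to $H_\w$, so $c(\w) = \rk H_\w = \rk H_{\w_1} + \rk H_{\w_2} = c(\w_1) + c(\w_2)$.

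The hard part will be the local gluing construction: one must verify that the two closed $1$-forms, each of which near the excised sphere looks like $df_i$ for a Morse function with a center singularity inside the ball, can be smoothly matched across the identified boundary spheres to yield a \emph{closed} form that is still locally a differential of a Morse function everywhere, with no spurious singularities introduced on the gluing collar. This requires choosing the two local models compatibly---orienting them so the forms agree up to sign and rescaling on an annular collar $S^{n-1} \times (-\eps, \eps)$---and checking that the interpolation can be taken to have no critical points in the collar, so that $\Sing\w = \Sing\w_1 \sqcup \Sing\w_2$ minus possibly the two excised centers. A clean way to avoid introducing new singularities is to remove the two centers entirely and interpolate between the nonsingular radial forms $df_1$ and $-df_2$ on the collar by a monotone reparametrization, so the collar carries a nonsingular foliation by spheres consisting only of compact homologically trivial leaves. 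Once this local model is in place, the two additivity statements follow from the global-to-local decomposition described above, and the cohomology class $[\w]$ restricts correctly on each piece so that the minimal-component and compact-leaf structure of each $\w_i$ is faithfully reproduced in $M$.
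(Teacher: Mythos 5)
There is a genuine gap in your construction: you place the excised ball ``inside a cylindrical neighborhood of a compact leaf, or, failing the existence of such a leaf, near a center-type singularity,'' and your gluing model explicitly requires a ball bounded by a small spherical leaf around a center. But a Morse form need not have any compact leaf \emph{or} any center: if the foliation $\F[\w_i]$ is minimal (all leaves non-compactifiable), it has no compact leaves, hence no centers, and it may have no singularities at all --- the irrational winding on $T^2$ is the standard example, and it is precisely one of the building blocks used later (Lemma~\ref{lem:M2g} takes connected sums with $m$ tori carrying irrational windings). So your surgery has nowhere to go in exactly the cases the lemma is needed for, and the claim that the surgery can always be ``performed away from all minimal components'' is false in general. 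The paper's proof handles this by \emph{locally adding} a center and a conic singularity to each $\w_i$ in an arbitrary spot --- possibly inside a minimal component --- and then arguing that, because non-compactifiable leaves are dense on both sides of the affected leaf, the modification does not change $m(\w_i)$; when the modification is instead done in a cylindrical neighborhood of a compact leaf, the nearby homologous leaves keep contributing the same class to $H_{\w_i}$, so $c(\w_i)$ is also preserved.

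A secondary issue: even in the cylindrical-neighborhood option you mention, a small ball inside the cylinder is not bounded by a leaf; removing it punctures a one-parameter family of compact leaves, so you would still have to argue (as the paper does) that destroying one such leaf does not drop $\rk H_{\w_i}$. Your Mayer--Vietoris argument for $H_{\w}=H_{\w_1}\oplus H_{\w_2}$ and the observation that the new spherical leaves in the collar are null-homologous are fine and essentially match the paper's second half; it is the local existence and placement of the surgery region that needs to be repaired.
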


\begin{proof}
Consider a form $\w$ constructed as shown in \figurename~\ref{fig:connsum}. 
It coincides with $\w_i$ outside a small area where $M_i$ are glued together.
We assume that each $\w_i$ was locally distorted either in a minimal component or in a cylindrical neighborhood covered by homologous compact leaves.
In the former case, since nearby leaves are dense on either side of the affected leaf, the distortion does not change the number of minimal components.
In the latter case, even though the distortion ``destroys'' one compact leaf, the nearby leaves contribute the same value to $H_{\w_i}$. 
In either case, the new compact leaves introduced in the process are homologically trivial.

\begin{figure}[t]
\centering
\setlength\unitlength{0.34ex}
\vspace{0\unitlength}%
\includegraphics[width=100\unitlength]{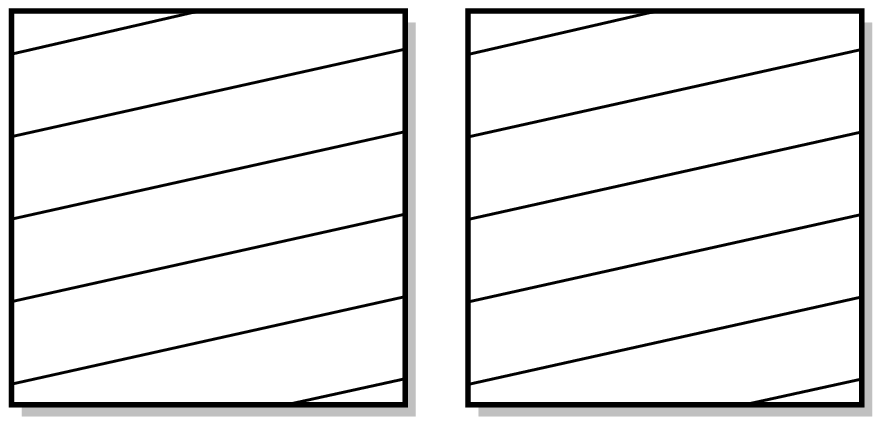}%
\begin{picture}(0,0)(100,0)
\put(50,-7){\makebox[0pt][c]{({\it a})}}
\end{picture}
\hspace{3ex}
\includegraphics[width=100\unitlength]{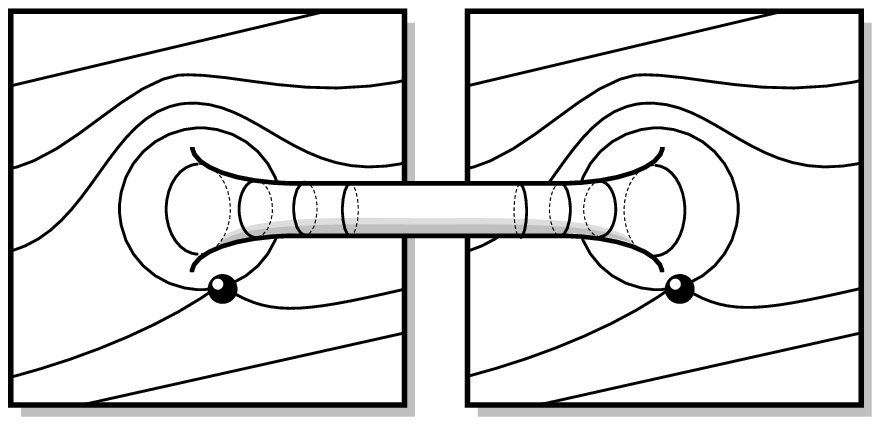}%
\begin{picture}(0,0)(100,0)
\put(50,-7){\makebox[0pt][c]{({\it b})}}
\end{picture}%
\vspace{7\unitlength}%
\caption{\label{fig:connsum}
Construction of the form on the connected sum. 
(a) Manifolds $M_1$ and $M_2$ with Morse forms $\w_1$, $\w_2$, respectively.
(b) A center and a conic singularity are locally added to each form, and the manifolds are glued together by spheres around the removed centers.
The new form $\w$ on $M_1\connsum M_2$ has two additional conic singularities.}
\end{figure}

Since the two sides are separated by compact leaves, each minimal component of $\w$ lies either in $M_1$ or in $M_2$, and thus $m(\w)=m(\w_1)+m(\w_2)$.
Similarly, 
homologically non-trivial leaves of $\w$ are homologous to either leaves of $\w_1$ or leaves of $\w_2$; 
in particular, $H_\w=H_{\w_1}\oplus H_{\w_2}$ and thus $c(\w)=c(\w_1)+c(\w_2)$.
\end{proof}

\subsection{Relation between $\b$, $b_1(M)$, $m(\w)$, and\/ $c(\w)$}%

Let $\w$ be a Morse form on a smooth closed orientable manifold $M$, $\dim M\ge2$, defining a foliation with exactly $c(\w)$ homologically independent compact leaves and $m(\w)$ minimal components. 
The following inequalities
have been proved independently: 
\begin{align}
m(\w)+c(\w)&\le\b\ {\textrm{\cite{Gelb10}}},\label{eq:c+m}\\
2m(\w)+c(\w)&\le b_1(M)\ {\textrm{\cite{Gelb09}}}.\label{eq:c+2m}
\end{align}

For some manifolds,~\eqref{eq:b',b(sum)} and Theorem~\ref{theor:b'(dir-prod)} allow direct calculation of $\b$. This may allow one to choose between~\eqref{eq:c+m} and~\eqref{eq:c+2m}. 
Namely, denoting $b'=\b$, $b=b_1(M)$, unless $b'=b=0$ we have:
\begin{enumerate}
\renewcommand{\labelenumi}{$\mathrm{\theenumi}$}
\renewcommand{\theenumi}{(\roman{enumi})}
\item If\/ $b'\le\frac12b$, then~\eqref{eq:c+m} is stronger;
\item\label{case:independent}
      If\/ $\frac12b<b'<b$, then they are independent;
\item If\/ $b'=b$, then~\eqref{eq:c+2m} is stronger. 
\end{enumerate}

In particular,~\eqref{eq:c+m} is always stronger for $\dim M=2$. 
However, for any $\dim M\ge3$, by Theorem~\ref{theor:(k,m)-dimM>2} all three cases are possible; in particular, there exist manifolds for which the two estimates are independent.
By~\eqref{eq:0<b'M<bM}, the case $b'>b$ is impossible.

More specifically, in the case~\ref{case:independent}, if seen as conditions on $c(\w)$ under given $m(\w)$ and vice versa, 
\begin{itemize}
\renewcommand{\labelitemi}{--}
\itemsep0em
\item \eqref{eq:c+m} is stronger when $m(\w)<b-b'$ or when $c(\w)>2b'-b$;
\item \eqref{eq:c+2m} is stronger when $m(\w)> b-b'$ or when $c(\w)<2b'-b$,
\end{itemize}
and they are equivalent in case of equalities.

We can generalize 
Theorem~\ref{theor:(k,m)-dimM>2} to observe that
there are no relations between $m(\w)$, $c(\w)$, $b'_1(M)$, and $b_1(M)$ other than those given by~\eqref{eq:c+m}, \eqref{eq:c+2m}, \eqref{eq:0<b'M<bM}, \eqref{eq:iff}, and, for an orientable surface,~\eqref{eq:b(M^2_g)}:

\begin{theorem}\label{theor:b'bmc}
Let\/ $n,m,c,b',b\in\NSet^0$. There exists a smooth closed connected oriented $n$-manifold\/ $M$ with $\b=b'$ and $b_1(M)=b$, 
and a Morse form foliation $\F$ on it with $m$ minimal components and exactly $c$ homologically independent compact leaves, iff 
\begin{alignat}{3}
n=2\mathrm{:}\quad
&\text{\eqref{eq:b(M^2_g)} for $M^2_g$\upshape{:}}\quad &&0\le b=2b',\notag\\
&\text{\eqref{eq:c+m}\upshape{:}}                                                                                               &&0\le m+c\le b',\notag\\
n\ge3\mathrm{:}\quad
&\text{\eqref{eq:0<b'M<bM}, \eqref{eq:iff}\upshape{:}}          &&b'=b=0\textrm{ or }1\le b'\le b,\label{eq:Th-0<b'M<bM}\\
&\text{\eqref{eq:c+m}\upshape{:}}                                                                                               &&0\le m+c\le b',\label{eq:Th-c+m}\\
&\text{\eqref{eq:c+2m}\upshape{:}}                                                                                              &&0\le 2m+c\le b.\label{eq:Th-c+2m}
\end{alignat}
\end{theorem}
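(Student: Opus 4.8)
The plan is to separate necessity from sufficiency. Necessity is immediate: $m,c,b',b\ge 0$ by definition, the bounds $m+c\le b'$ and $2m+c\le b$ are \eqref{eq:c+m} and \eqref{eq:c+2m}, the alternative $b'=b=0$ or $1\le b'\le b$ is \eqref{eq:0<b'M<bM}--\eqref{eq:iff}, and for $n=2$ the relation $b=2b'$ is the orientable-surface line of \eqref{eq:b(M^2_g)}; note that on a surface $m+c\le b'$ already gives $2m+c\le 2(m+c)\le 2b'=b$, which is why \eqref{eq:c+2m} is not listed for $n=2$. For the sufficiency when $n=2$ I would take the genus-$b'$ surface $M^2_{b'}$ and build a Morse form using $m+c\le b'$ of its handles, with $m$ handles carrying a minimal component and $c$ carrying a non-separating, hence homologically non-trivial, compact leaf; this realizes $(m,c,b',2b')$.

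The substance is the case $n\ge 3$. Here the plan is to assemble $M$ as a connected sum of elementary blocks, adding the forms as in Lemma~\ref{lem:m+m,c+c}, so that $m$ and $c$ add over the summands while $b'$ and $b$ add by \eqref{eq:b',b(sum)}. Writing $\alpha=b'-m-c\ge 0$ and $\beta=b-2m-c\ge 0$, the sole coupling between the two inequalities is $b'\le b$, i.e.\ $\alpha\le\beta+m$; the sign of the ``surplus'' $\alpha-\beta$ governs the construction.

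I would use four kinds of blocks, each made $n$-dimensional by crossing a low-dimensional model with a sphere $S^k$, $k\ge 2$ (a product foliation over a minimal one is minimal, so this preserves $m$ and $c$, and by Theorem~\ref{theor:b'(dir-prod)} and the K\"unneth formula it leaves $b'$ and $b$ unchanged, exactly as in Lemma~\ref{lem:xS^n}): (a) a padding block $H^n_{1,t}$ from Construction~\ref{con:(k,m)-dimM>2} with an exact Morse form, giving $(m,c,b',b)=(0,0,1,t)$; (b) a compact-leaf block $S^1\times S^{n-1}$ with the angle form $\mathrm{pr}^*d\theta$, whose leaves $\{*\}\times S^{n-1}$ generate $H_{n-1}$, giving $(0,1,1,1)$; (c) a co-rank-one minimal block, e.g.\ the Heisenberg nilmanifold $H^3$ (and, for larger $t$, a Harvey manifold $H^n_{1,t}$) carrying the pull-back of an irrational form from its torus base, whose leaves are dense, giving $(1,0,1,t)$, $t\ge 2$; and (d) an ``efficient'' minimal block with $b_1'=b_1$, giving $(1,0,2,2)$. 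Setting $N_2=\max\{0,\alpha-\beta\}$, one checks $N_2\le\min\{\alpha,m\}$ from $\alpha\le\beta+m$, and I would take $N_2$ copies of (d), $m-N_2$ blocks (c), $c$ blocks (b), and $\alpha-N_2$ blocks (a). A short count gives co-ranks summing to $m+c+\alpha=b'$; the first Betti numbers, which can be freely enlarged on the Harvey-type blocks, sum to a minimum of $2m+c+\alpha-N_2$, equal to $b$ when $\alpha\ge\beta$ and at most $b$ otherwise, so they can be adjusted to total exactly $b$.

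The hard part is block (d): a closed oriented manifold with $b_1'=b_1=2$ admitting a Morse form with exactly one minimal component and no homologically independent compact leaf. This is precisely what is needed to enter the regime $b<b'+m$ (positive surplus) that the co-rank-one pieces of Construction~\ref{con:(k,m)-dimM>2} cannot reach. Such a block cannot be a surface, since orientable surfaces obey $b=2b'$, and it cannot be produced by Levitt's criterion $\rk\w>b'$, which is vacuous when $\rk\w\le b=b'$; it must be exhibited directly. I would search among hyperbolic $3$-manifolds with $b_1=2$ whose co-rank $\b$ equals $2$ (equivalently, cut number $2$), taking a Morse form representing a class in a non-fibered direction of the Thurston norm ball, which forces minimal components; the remaining work is to arrange exactly one such component and to kill the homology of every compact leaf. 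Producing this single block---and its higher-dimensional analogues by products with spheres---is where essentially all the difficulty lies, and once it is available the connected-sum bookkeeping above realizes every quadruple allowed by \eqref{eq:Th-0<b'M<bM}--\eqref{eq:Th-c+2m}.
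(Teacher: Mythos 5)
Your skeleton matches the paper's: necessity from the known inequalities, sufficiency by a connected sum of elementary foliated blocks whose invariants add via Lemma~\ref{lem:m+m,c+c} and~\eqref{eq:b',b(sum)}, with the bookkeeping governed by how far $b'+m$ exceeds $b$. But your proof has a genuine gap exactly where you flag it: block (d), the piece with $b_1'=b_1=2$ carrying a Morse form with one minimal component and $c=0$, is never constructed. You only propose to ``search among hyperbolic 3-manifolds with $b_1=2$ and cut number $2$'' and to arrange the foliation ``directly''; that is a research program, not a proof, and without this block the regime $b-b'<m$ (permitted by \eqref{eq:Th-c+m}--\eqref{eq:Th-c+2m}, e.g.\ $m=b'=b=1$, $c=0$) is not covered. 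The paper obtains this block essentially for free: it takes $H^n_{1,1}\connsum H^n_{1,1}=(S^1\times S^{n-1})\connsum(S^1\times S^{n-1})$, which has $b_1'=b_1=2$, and invokes the Arnoux--Levitt existence theorem \cite[Theorem~1]{AL} to produce a \emph{minimal} Morse form foliation on it, so that $m(\w)=1$ and $c(\w)=0$ automatically. No hyperbolic geometry or Thurston-norm analysis is needed.

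A secondary weak point is your block (c) for $t\ge2$: Harvey's manifolds are hyperbolic, so there is no ``torus base'' from which to pull back an irrational form; the paper again gets a minimal foliation on $H^n_{1,k_i}$, $k_i\ge2$, from \cite[Theorem~1]{AL} (Levitt's criterion $\rk\w>\b$ would only give \emph{some} minimal component, not control $m(\w)=1$ and $c(\w)=0$). Your necessity direction and the $n=2$ case are fine and agree with the paper (Lemma~\ref{lem:M2g}), and your arithmetic with $\alpha=b'-m-c$, $\beta=b-2m-c$, $N_2=\max\{0,\alpha-\beta\}$ is the same count the paper performs with the $k_i$ in \eqref{eq:sum-m_i=b}. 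The missing ingredient is a single citation, but it is the load-bearing one.
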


Apart from a trivial foliation on $S^n$, the proof is given by the following constructions. 

\begin{lemma}
On $S^k\times S^1$, $k\ge1$, there exists Morse form foliation with $m(\w)=0$, $c(\w)=0$.
\end{lemma}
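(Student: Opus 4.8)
The plan is to exhibit an explicit Morse form $\w$ on $S^k\times S^1$ whose foliation has no minimal components and no homologically independent compact leaves. The most natural source of a closed $1$-form with the right homology class is the $S^1$-factor: let $p\from S^k\times S^1\to S^1$ be the projection and $d\theta$ the standard angular form on $S^1$. The pullback $p^*d\theta$ is a nowhere-zero closed $1$-form whose leaves are the fibers $S^k\times\{pt\}$, but this is not a Morse form since it has no singularities (which is fine topologically, but I want a Morse form, so I will perturb it). The key idea is instead to take a Morse function $f\from S^k\to\RSet$ on the sphere factor with exactly two critical points (a minimum and a maximum), and build $\w$ so that it behaves like $df$ near the poles and like $p^*d\theta$ away from them.

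Concretely, I would take $\w = df + \phi\,p^*d\theta$ on a suitable decomposition, or more cleanly, realize $\w$ as follows. Write $S^k\times S^1$ and choose the cohomology class $[\w]\in H^1(S^k\times S^1;\RSet)\cong\RSet$ (for $k\ge2$) to be the generator coming from the $S^1$-factor. Since $S^k$ is simply connected for $k\ge2$, the integration periods of any closed $1$-form are controlled entirely by the $S^1$-direction, so $\rk\w=1$. A Morse form in this class that I can describe explicitly is obtained by rotating a standard height function: parametrize $S^1$ by $\theta$ and arrange two Morse singularities (a center-type configuration) so that the foliation consists of leaves that spiral around the $S^1$-direction, all compact and all homologous to the fiber $S^k$. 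The decisive point is that the fiber class $[S^k]\in H_{k}(S^k\times S^1)$ — wait, here the leaves have dimension $k$, and $H_k(S^k\times S^1)\cong\ZSet$ is generated by $[S^k\times\{pt\}]$; however every such fiber is null-homologous after being cut by the singularities, or rather the compact leaves bound, giving $H_\w=0$ and hence $c(\w)=0$.

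The main steps I would carry out, in order, are: (1) construct $\w$ explicitly as a Morse form in the generator class of $H^1(S^k\times S^1)$ with exactly the minimal number of singularities, verifying the Morse condition locally at each singular point; (2) show the foliation $\F$ has no non-compactifiable leaves, so $m(\w)=0$ — this follows because all leaves close up, which one sees from the fact that a nonsingular level-type foliation of a form with rank $1$ and integral class has compact leaves; (3) compute $H_\w$ and show it is trivial, giving $c(\w)=0$. For step (3) the cleanest argument is that, after adding the small center/conic singularities as in the connected-sum construction of Lemma~\ref{lem:m+m,c+c}, every compact leaf is homologically trivial, exactly as noted there (``the new compact leaves introduced in the process are homologically trivial'').

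The hard part will be step (2): ruling out minimal components requires care because a generic Morse form on a manifold with $b_1'=1$ could \emph{a priori} have a minimal component (indeed $\rk\w>\b$ would force one, by the Levitt result cited in the introduction). Here, however, $\rk\w=1=\b$, so the rank criterion does not immediately force minimality away, and I must instead argue directly that my specific $\w$ has all leaves compactifiable. The safest route is to build $\w$ so that it is, away from its two singularities, a nonsingular fibration-like form whose regular leaves are the spheres $S^k\times\{\theta\}$ joined up into compact leaves; since these are the level sets of a map to $S^1$ that is a submersion off the singular set, each leaf is a closed submanifold, precluding minimal components. Thus the genuine obstacle is producing an $\w$ whose leaf space is manifestly $S^1$ (so that all leaves are compact), rather than having leaves that wind densely — and this is exactly what choosing the class as the $S^1$-generator with a two-critical-point height function on the $S^k$-factor achieves.
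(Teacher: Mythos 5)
Your overall plan---make every leaf compact and then check that each one is null-homologous---is the right one, but the specific choice you commit to, taking $[\w]$ to be the generator of $H^1(S^k\times S^1)$ coming from the $S^1$-factor, cannot work, and the spot where your write-up hesitates (``wait, here the leaves have dimension $k$\dots'') is exactly where it breaks. If $m(\w)=0$, then $M$ decomposes into finitely many maximal cylinders $\gamma_i\times(0,1)$ of compact leaves glued along compact singular leaves, and for every loop $\sigma$ one has $\int_\sigma\w=\sum_i h_i\,([\sigma]\cdot[\gamma_i])$, where $h_i$ is the height of the $i$-th cylinder; consequently $m(\w)=0$ and $c(\w)=0$ together force $[\w]=0$. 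So any form proving this lemma must be exact, and for your chosen nonzero class step (3) can never be completed: a foliation in the fiber-projection class with all leaves compactifiable necessarily has a compact leaf homologous to $S^k\times\{pt\}$, which \emph{generates} $H_k(S^k\times S^1)\cong\ZSet$ and therefore does not ``bound after being cut by the singularities''. Your appeal to the remark in Lemma~\ref{lem:m+m,c+c} is also misplaced: the leaves declared homologically trivial there are only the small new leaves created near the added center and conic singularity, not the pre-existing fiber leaves. (Your step (2), ruling out minimal components via a submersion-to-$S^1$ picture, is fine in spirit; the fatal issue is only $c(\w)=0$.)

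The fix, and what the paper actually does in Figures~\ref{fig:ex1} and~\ref{fig:ex2}, is to use an exact form whose regular leaves are individually null-homologous. Write $S^k\times S^1=(D^k_1\times S^1)\cup_{\partial}(D^k_2\times S^1)$ and on each piece take the level foliation of a function with one center and one conic singularity, constant on the common boundary; the compact leaves are then either small spheres $S^k$ around a center (bounding balls) or copies of $S^{k-1}\times S^1=\partial(D^k_i\times S^1)$ (bounding one of the two pieces), so $H_\w=0$ and $c(\w)=0$, while every leaf is compact or compactifiable, so $m(\w)=0$. Note that even ``take $df$ for an arbitrary Morse function $f$'' would not suffice: on $T^2$ the height function of the torus standing on its side has regular level components that are non-separating circles, giving $c(df)=1$; the singularities must be arranged as in the $\cos x+\cos y$ pattern of \figurename~\ref{fig:ex1} so that every regular leaf separates.
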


\begin{proof} 
The corresponding foliations for $k=1$ and $k\ge2$ are shown in Figures~\ref{fig:ex1} and~\ref{fig:ex2}, respectively.
\end{proof}

\begin{figure}[t]
\centering
\setlength\unitlength{0.2ex}
\vspace{7\unitlength}
\includegraphics[width=100\unitlength]{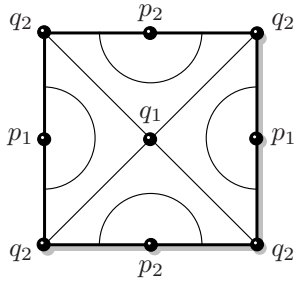}%
\begin{picture}(0,0)(100,0)
\put(-1,100){\makebox[0pt][r]{$q_2$}}
\put(-1,50){\makebox[0pt][r]{$p_1$}}
\put(-1,0){\makebox[0pt][r]{$q_2$}}
\put(102,100){$q_2$}
\put(102,50){$p_1$}
\put(102,0){$q_2$}
\put(50,104){\makebox[0pt][c]{$p_2$}}
\put(50,60){\makebox[0pt][c]{$q_1$}}
\put(50,-7){\makebox[0pt][c]{$p_2$}}
\end{picture}%
\vspace{7\unitlength}%
\caption{\label{fig:ex1}
Torus $T^2$ foliated with $m(\w)=0$, $c(\w)=0$. 
The square is self-glued by the sides to form a torus.
The foliation, shown in thin lines, has two centers $p_1$, $p_2$ and two conic singularities $q_1$, $q_2$.}
\end{figure}

\begin{figure}[t]
\centering
\setlength\unitlength{0.36ex}
\vspace{0\unitlength}
\includegraphics[width=100\unitlength]{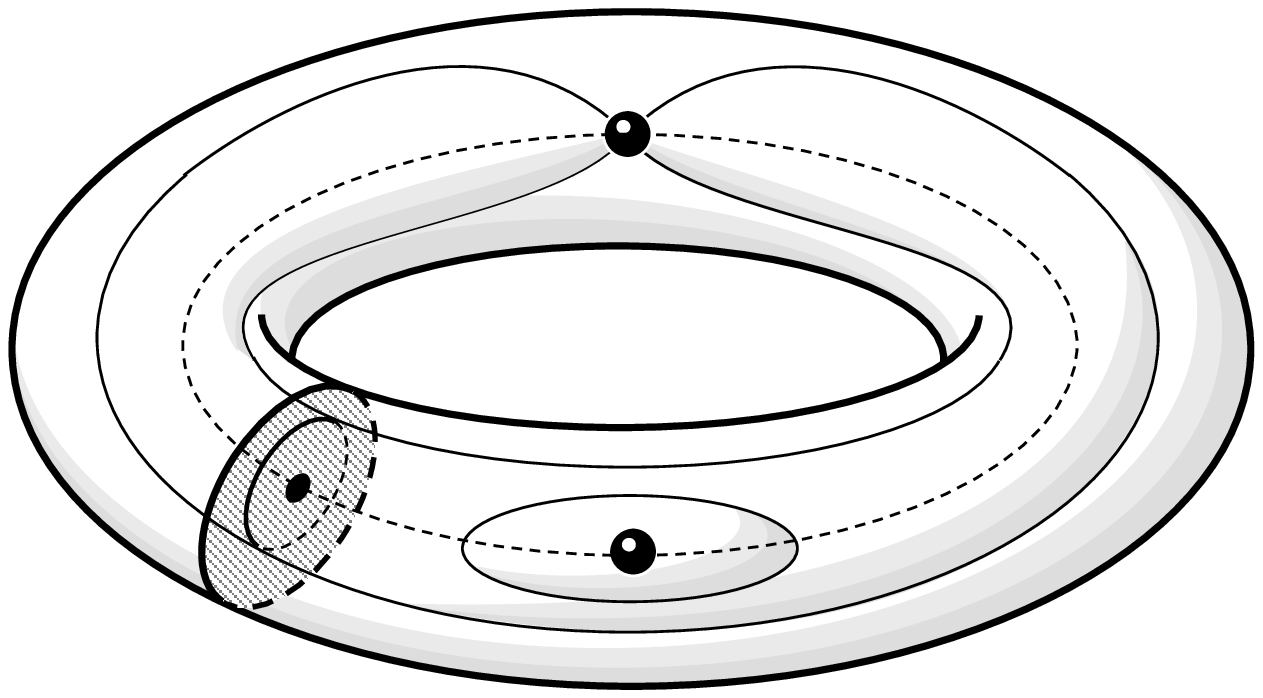}%
\begin{picture}(0,0)(100,0)
\put(6,1){$D^k_1$}
\put(15,37){$S^1$}
\end{picture}
\hspace{3ex}
\includegraphics[width=100\unitlength]{Example-2}%
\begin{picture}(0,0)(100,0)
\put(6,1){$D^k_2$}
\put(15,37){$S^1$}
\end{picture}%
\vspace{0\unitlength}%
\caption{\label{fig:ex2}
Manifold $S^k\times S^1$, $k\ge2$, foliated with $m(\w)=0$, $c(\w)=0$. 
Shown are two manifolds $D^k_i\times S^1$ (solid tori for $k=2$), where $D^k_i$ are disks and $S^1$ is shown in dashed line
They are glued together by the boundary, so that $D^k_1\cup D^k_2=S^k$.
The foliation, shown in thin lines, has two centers and two conic singularities, its compact leaves being either spheres $S^k$ or $S^{k-1}\times S^1$ (tori for $k=2$).}
\end{figure}

\begin{lemma}\label{lem:M2g}
For $0\le m+c\le g$, on $M=M^2_g$ there exists a Morse form foliation with $m(\w)=m$ and\/ $c(\w)=c$.
\end{lemma}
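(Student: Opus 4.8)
The plan is to realize $M^2_g$ as the connected sum $\opConnsum^g T^2$ of $g$ tori and to assemble the required foliation from elementary foliations on the individual tori, exploiting the additivity of $m(\w)$ and $c(\w)$ under connected sum established in Lemma~\ref{lem:m+m,c+c}. The hypothesis $0\le m+c\le g$ (necessary by~\eqref{eq:c+m}) is exactly what guarantees that there are enough summands to distribute the prescribed invariants.

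First I would exhibit three model Morse form foliations on the oriented torus $T^2=S^1\times S^1$, each with a finite (possibly empty) singular set. (a)~An \emph{irrational linear} foliation, defined by a nonsingular closed $1$-form whose two periods have irrational ratio: every leaf is dense, the foliation is minimal, and $(m,c)=(1,0)$. (b)~A \emph{rational linear} foliation by parallel circles, defined by a nonsingular closed $1$-form with commensurable periods: all leaves are compact and represent one and the same primitive nonzero class of $H_1(T^2)$, so $H_\w\cong\ZSet$ and $(m,c)=(0,1)$. (c)~The foliation of Figure~\ref{fig:ex1}, supplied by the preceding lemma, whose compact leaves are all homologically trivial, giving $(m,c)=(0,0)$.

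Next, since $g-m-c\ge0$, I would take $m$ copies of model~(a), $c$ copies of model~(b), and $g-m-c$ copies of model~(c)---in all, $g$ tori---and form their connected sum, which is diffeomorphic to $M^2_g$. Applying Lemma~\ref{lem:m+m,c+c} iteratively across the $g$ summands, and using that $m(\w)$ and $c(\w)$ add over summands, produces a Morse form $\w$ on $M^2_g$ with $m(\w)=m\cdot1+c\cdot0+(g-m-c)\cdot0=m$ and $c(\w)=m\cdot0+c\cdot1+(g-m-c)\cdot0=c$, as desired. For $g=0$ the constraint forces $m=c=0$ and $M^2_0=S^2$ carries the two-center latitude foliation with $(m,c)=(0,0)$.

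The only genuine verifications, as opposed to routine checking, concern the two linear models and the applicability of the connected-sum construction. For~(a) one must confirm minimality of the irrational foliation (density of every leaf), and for~(b) that the parallel circles all lie in a single nonzero homology class so that $\rk H_\w=1$. One must also check that each model admits the local distortion demanded in the proof of Lemma~\ref{lem:m+m,c+c}---performed inside its single minimal component for~(a), and inside a cylindrical neighborhood of homologous compact leaves for~(b) and~(c)---so that the construction of the form on the connected sum applies verbatim. I expect this bookkeeping, rather than any serious difficulty, to be the main content of the argument.
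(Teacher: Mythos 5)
Your proposal is correct and follows exactly the paper's own argument: the paper also builds $M^2_g$ as the connected sum of $m$ tori with an irrational winding, $c$ tori with a compact non-singular foliation, and $g-(m+c)$ tori foliated as in \figurename~\ref{fig:ex1}, then invokes Lemma~\ref{lem:m+m,c+c}. The only difference is that you spell out the verifications (minimality of the irrational winding, the single homology class of the parallel circles, and the $g=0$ case) that the paper leaves implicit.
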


\begin{proof} 
As shown in \figurename~\ref{fig:M2g}, construct $M^2_g$ as the connected sum of
\begin{itemize}
\renewcommand{\labelitemi}{--}
\itemsep0em
\item $m$ tori with an irrational winding;
\item $c$ tori $T^2$ with a compact non-singular foliation;
\item $g-(c+m)$ tori with a foliation shown in \figurename~\ref{fig:ex1};
\end{itemize}
with the forms glued together as shown in \figurename~\ref{fig:connsum}. 
Lemma~\ref{lem:m+m,c+c} completes the proof.
\end{proof} 

\newcommand{\UB}[2]{\ensuremath{\underbrace{#1\rule[-5.6ex]{0ex}{0ex}}_{\textstyle #2\rule[1.3ex]{0ex}{0ex}}}}
\newcommand\TwoPictures[1]{\vcenter{\hbox{\includegraphics[width=10ex]{#1}}}\cdots\vcenter{\hbox{\includegraphics[width=10ex]{#1}}}}

\begin{figure}[t]
  \centering
  \UB{
  \UB{\TwoPictures{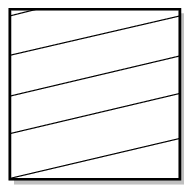}}{m}
        \hspace{2ex}
        \UB{\TwoPictures{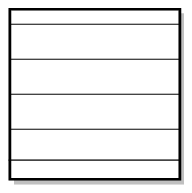}}{c\rule[-0.6ex]{0ex}{0ex}}
        \hspace{2ex}
        \TwoPictures{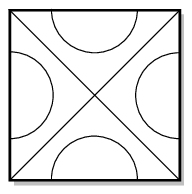}
        }g
  \caption{\label{fig:M2g}
  Construction of $M^2_g$ in the proof of Lemma~\ref{lem:M2g} as the connected sum of tori with different foliations.}
\end{figure}

This easily generalizes to higher dimensions:

\begin{lemma}
For $n\ge3$ and\/ $m,c,b',b\in\NSet^0$ satisfying~\eqref{eq:Th-0<b'M<bM}--\eqref{eq:Th-c+2m}, there exists an $n$-manifold\/ $H^n_{b',b}$ from Construction~\ref{con:(k,m)-dimM>2} and a Morse form on it with $c(\w)=c$ and\/ $m(\w)=m$.
\end{lemma}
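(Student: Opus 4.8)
The plan is to read a suitable Morse form directly off the connected-sum structure produced by Construction~\ref{con:(k,m)-dimM>2}. Recall that $H^n_{b',b}=\opConnsum_{i=1}^{b'}H^n_{1,k_i}$ with $k_i\ge1$ and $\sum_i k_i=b$, where the sizes $k_i$ are still at our disposal. By Lemma~\ref{lem:m+m,c+c}, if each summand $H^n_{1,k_i}$ carries a Morse form $\w_i$ realizing a pair $(m_i,c_i)=(m(\w_i),c(\w_i))$, then $M=H^n_{b',b}$ carries a Morse form $\w$ with $m(\w)=\sum_i m_i$ and $c(\w)=\sum_i c_i$. The whole problem thus reduces to prescribing a pair $(m_i,c_i)$ on each block, realizing it locally, and choosing the $k_i$ so that the blocks fit the global data $(b',b)$.

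Each block has $\crk=1$ and $b_1=k_i$, so by~\eqref{eq:Th-c+m} and~\eqref{eq:Th-c+2m} applied to the block the only local pairs available are $(0,0)$, $(0,1)$, and $(1,0)$, the last requiring $k_i\ge2$; I would realize these by three standard pictures. A \emph{trivial} block carries the foliation of Figure~\ref{fig:ex2} (or an exact form), giving $(0,0)$ for any $k_i\ge1$. A \emph{compact} block is taken to be $H^n_{1,1}=S^1\times S^{n-1}$ with a rank-one form fibering it over $S^1$; the fiber $S^{n-1}$ is homologically nontrivial, giving $(0,1)$. A \emph{minimal} block carries a form of rank $2$, which exists since $b_1=k_i\ge2$; because $\rk\w_i=2>1=\crk(H^n_{1,k_i})$, Levitt's theorem~\cite{Levitt90} produces a minimal component, while~\eqref{eq:Th-c+m} forces $m_i+c_i\le1$, so the block realizes exactly $(1,0)$. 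Concretely I would designate $m$ summands as minimal, $c$ as compact, and the remaining $b'-m-c$ as trivial, which is legitimate precisely because $m+c\le b'$ by~\eqref{eq:Th-c+m}.

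It then remains to fix the sizes: assign $k_i=2$ to each minimal block, $k_i=1$ to each compact and trivial block, and deposit the surplus first Betti number on, say, the trivial blocks (whose pair stays $(0,0)$ for any size). Summing the local contributions through Lemma~\ref{lem:m+m,c+c} gives $m(\w)=m$ and $c(\w)=c$ on $M=H^n_{b',b}$, which is oriented as a connected sum of oriented blocks. The hard part is the accounting hidden in this last step: a minimal block must spend two units of first Betti number while occupying only one of the $b'$ summands, so the sizes can be matched to $b$ only when $b=\sum_i k_i\ge 2m+(b'-m)=m+b'$. The crux of the argument is therefore to secure the inequality $m+b'\le b$ from the hypotheses before distributing the surplus; this is the step I would scrutinize most carefully, alongside the two local checks that a single rank-$2$ form on a minimal block yields precisely one minimal component and no homologically independent compact leaf.
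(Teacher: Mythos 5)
Your block-by-block strategy is the right general idea and matches the paper's first case, but the accounting step you flag as ``the crux'' is a genuine gap: the inequality $m+b'\le b$ does \emph{not} follow from the hypotheses. Your scheme forces every minimal block to carry $k_i\ge2$ (a block with $k_i=1$ is $S^1\times S^{n-1}$, where $2m_i+c_i\le b_1=1$ kills any minimal component), so the total Betti number is at least $2m+(b'-m)=m+b'$. But \eqref{eq:Th-0<b'M<bM}--\eqref{eq:Th-c+2m} allow, e.g., $n\ge3$, $b'=b=2$, $m=1$, $c=0$: here $m+c=1\le b'$ and $2m+c=2\le b$, yet $m+b'=3>b$. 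Your construction cannot realize this datum, and no redistribution of the $k_i$ over one-summand-per-minimal-component blocks can fix it.

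The paper closes exactly this case by splitting the proof in two. When $b-b'\ge m$ it does what you do. When $b-b'<m$ it takes only $b-b'$ summands with $k_i=2$ and realizes each of the remaining $m-(b-b')$ minimal components on a \emph{pair} of summands, $H^n_{1,1}\connsum H^n_{1,1}$, which has $b_1=b_1'=2$ and carries a minimal foliation by \cite[Theorem~1]{AL}; this spends two units of co-rank but only two (not three) units of $b_1$ per minimal component, and \eqref{eq:Th-c+2m} guarantees that $c$ single summands $H^n_{1,1}$ are still left over for the compact blocks. Your local realizations are otherwise fine (your use of Levitt's theorem via a rank-$2$ form on $H^n_{1,k_i}$, $k_i\ge2$, combined with $m_i+c_i\le\crk=1$, is a legitimate alternative to the paper's citation of \cite[Theorem~1]{AL} for that block), and the reduction via Lemma~\ref{lem:m+m,c+c} is the same as the paper's; only the two-summand minimal blocks are missing.
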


\begin{proof}
The construction is very similar to that of Lemma~\ref{lem:M2g}.
Observe that in~\eqref{eq:sum-m_i=b}, we can choose at most $b-b'$ summands $k_i\ge2$. 

If $b-b'\ge m$ (which by~\eqref{eq:Th-c+m} and~\eqref{eq:Th-c+2m} is always the case if $b'\le\frac12b$), 
then in~\eqref{eq:sum-m_i=b} choose $m$ summands $k_i\ge2$, which by~\eqref{eq:Th-c+m} leaves at least $c$ 
summands $k_i=1$:
\begin{align*}
H^n_{b',b}=\left(\opConnsum_{i=1}^m H^n_{1,k_i}\right)\connsum\left(\opConnsum_{i=1}^c H^n_{1,1}\right)\connsum\left(\opConnsum_{\vphantom{i=1}}  H^n_{1,1}\right).
\end{align*}
Otherwise, in~\eqref{eq:sum-m_i=b} choose $b-b'$ summands $k_i=2$, which by~\eqref{eq:Th-c+2m} leaves 
$$b'-(b-b') = b - 2(b-b') \ge (2m+c) - 2(b-b') = 2(m-(b-b'))+c$$
summands $k_i=1$:
\begin{align*}
H^n_{b',b}=
                \underbrace{
                \left(\opConnsum_{i=1}^{b-b'} H^n_{1,2}\right)
        \connsum\left(\opConnsum_{i=1}^{m-(b-b')}\left(H^n_{1,1}\connsum H^n_{1,1}\right)\right)
        }_{m\text{ manifolds}}
        \connsum\left(\opConnsum_{i=1}^c H^n_{1,1}\right)
        \connsum\left(\opConnsum_{\vphantom{i=1}} H^n_{1,1}\right).
\end{align*}

As in Lemma~\ref{lem:M2g}, the foliations on the manifolds are chosen as follows:
\begin{itemize}
\renewcommand{\labelitemi}{--}
\itemsep0em
\item On $m$ manifolds $H^n_{1,k_i}$, $k_i\ge2$, and $H^n_{1,1}\connsum H^n_{1,1}$ there exists a Morse form $\w$ with a minimal foliation~\cite[Theorem~1]{AL}; 
in particular, $m(\w)=1$ and $c(\w)=0$;
\item On $c$ manifolds $H^n_{1,1}=S^1\times S^{n-1}$, choose a compact non-singular foliation along $S^1$ with leaf $S^{n-1}$;
\item On the rest of $H^n_{1,1}$, choose a foliation shown in \figurename~\ref{fig:ex2};
\end{itemize}
Lemma~\ref{lem:m+m,c+c} completes the proof.
\end{proof}

\bibliographystyle{amsplain-maslo}
\bibliography{Gelbukh}

\end{document}